\crefname{section}{Section}{Sections}
\crefname{subsection}{\S}{\S\S}
\crefname{subsubsection}{\S}{\S\S}
\theoremstyle{plain}
\newtheorem{lemma}{Lemma}[section]
\newtheorem{corollary}[lemma]{Corollary}
\newtheorem{theorem}[lemma]{Theorem}
\theoremstyle{plain}
\newtheorem{theoremN}{Theorem}
\theoremstyle{plain}
\newtheorem{definition}[lemma]{Definition}
\newtheorem{remark}[lemma]{Remark}
\crefname{definition}{definition}{definitions}
\crefname{ex}{example}{examples}
\crefname{exs}{example}{examples}
\crefname{remark}{remark}{remarks}
\crefname{remarks}{remark}{remarks}
\crefname{convention}{convention}{conventions}
\crefname{notation}{notation}{notations}
\crefname{table}{table}{tables}
\crefname{lemma}{lemma}{lemmas}
\crefname{proposition}{proposition}{propositions}
\crefname{propositionN}{proposition}{propositions}
\crefname{corollary}{corollary}{corollaries}
\crefname{corollaryN}{corollary}{corollaries}
\crefname{theorem}{theorem}{theorems}
\crefname{theoremN}{theorem}{theorems}
\crefname{enumi}{}{}
\crefname{assumption}{assumption}{Assumptions}
\crefname{construction}{construction}{Constructions}
\crefname{equation}{}{}
\numberwithin{equation}{section}
\theoremstyle{nonumberplain}
\newtheorem{proof}{Proof}
\newcommand\pf[1]{\newtheorem{#1}{Proof of \Cref{#1}}}
\newcommand\bA{{\mathbb A}}
\newcommand\bG{{\mathbb G}}
\newcommand\bH{{\mathbb H}}
\newcommand\bJ{{\mathbb J}}
\newcommand\bK{{\mathbb K}}
\newcommand\bU{{\mathbb U}}
\newcommand\cO{{\mathcal O}}
\newcommand\fu{{\mathfrak u}}
\DeclareMathOperator{\Ad}{Ad}
\newcommand{\cat}[1]{\textsc{#1}}
\newcommand{\qedhere}{\mbox{}\hfill\ensuremath{\blacksquare}}
\renewcommand{\square}{\mathrel{\Box}}
\title{Deformations of nearby subgroups and approximate Jordan constants}
\author{Alexandru Chirvasitu}
\begin{document}

\date{}

\newcommand{\Addresses}{{
  \bigskip
  \footnotesize

  \textsc{Department of Mathematics, University at Buffalo}
  \par\nopagebreak
  \textsc{Buffalo, NY 14260-2900, USA}  
  \par\nopagebreak
  \textit{E-mail address}: \texttt{achirvas@buffalo.edu}


}}

\maketitle

\begin{abstract}
  Let $\mathbb{U}$ be a Banach Lie group and $S\subseteq \mathbb{U}$ an ad-bounded subset thereof, in the sense that there is a uniform bound on the adjoint operators induced by elements of $S$ on the Lie algebra of $\mathbb{U}$. We prove that (1) $S$-valued continuous maps from compact groups to $\mathbb{U}$ sufficiently close to being morphisms are uniformly close to morphisms; and (2) for any Lie subgroup $\mathbb{G}\le \mathbb{U}$ there is an identity neighborhood $U\ni 1\in \mathbb{U}$ so that $\mathbb{G}\cdot U\cap S$-valued morphisms (embeddings) from compact groups into $\mathbb{U}$ are close to morphisms (respectively embeddings) into $\mathbb{G}$.

  This recovers and generalizes results of Turing's to the effect that (a) Lie groups arbitrarily approximable by finite subgroups have abelian identity component and (b) if a Lie group is approximable in this fashion and has a faithful $d$-dimensional representation then it is also so approximable by finite groups with the same property. Another consequence is a strengthening of a prior result stating that finite subgroups in a Banach Lie group sufficiently close to a given compact subgroup thereof admit a finite upper bound on the smallest indices of their normal abelian subgroups (an approximate version of Jordan's theorem on finite subgroups of linear groups). 
\end{abstract}

\noindent {\em Key words:
  Banach Lie algebra;
  Banach Lie group;
  Hausdorff metric;
  Jordan group;
  Lie subgroup;
  ad-bounded;
  approximate morphism;
  exponential map

}

\vspace{.5cm}

\noindent{MSC 2020:
  22E65; 
  58B25; 
  17B65; 
  20E07; 
  22A25; 
  58C15 
}


\section*{Introduction}

The {\it Banach Lie groups} of the present note are those of \cite[Definition IV.1]{neeb-inf}, \cite[\S III.1.1, Definition 1]{bourb_lie_1-3}, \cite[\S VI.5]{lang-fund}, and any number of other sources: possibly infinite-dimensional Banach manifolds as well as topological groups, with the group structure maps (multiplication and inverse) analytic.  

One motivating theme for the sequel is that of \emph{stability} in Banach Lie groups $\bU$, in the Hyers-Ulam-Rassias sense (\cite{zbMATH00149467}, \cite[pp.1-2]{jung_hur-stab} and the latter's extensive references, etc.): phenomena to the effect that (under appropriate hypotheses) almost morphisms into $\bU$ are close to morphisms, subgroups of $\bU$ close to given Lie subgroups $\bG\le \bU$ are close to subgroups of $\bG$, etc. A bit of terminology (expanding slightly on \cite[Definition 4.1]{2212.06255v1}) will help state the results.

\begin{definition}\label{def:adbdd}
  A subset $S\subseteq \bU$ of a Banach Lie group is {\it ad-bounded} (on a subspace $V\le \fu:=Lie(\bU)$) if its image
  \begin{equation*}
    \Ad(S)\subset GL(\fu)
    ,\quad
    \bU
    \xrightarrow{\Ad:=\text{{\it adjoint representation} \cite[\S III.3.12]{bourb_lie_1-3}}}
    GL(\fu)
  \end{equation*}
  is bounded (respectively has bounded restriction to $V$). 
\end{definition}

That in hand, an amalgam of the stability-flavored \Cref{th:almostmor,th:mor.almost2gp} reads:

\begin{theoremN}\label{thn:close.to}
  Let $\bU$ be a Banach Lie group and $S\subseteq \bU$ an ad-bounded subset.
  \begin{enumerate}[(1),wide]
  \item For any $\varepsilon>0$ there is some $\delta>0$ such that {\it $\delta$-morphisms}
    \begin{equation*}
      \text{compact}\quad\bK
      \xrightarrow{\quad\text{continuous}\quad}
      S
      \subseteq
      \bU
    \end{equation*}
    in the sense that
    \begin{equation}\label{eq:varphideltaclose}
      d(\varphi(st),\ \varphi(s)\varphi(t))<\delta
      ,\quad
      \forall s,t\in \bK
    \end{equation}
    are $\varepsilon$-close in the space $\cat{Cont}(\bK\to \bU)$ of continuous maps to morphisms.

  \item Given a Lie subgroup $\bG\le \bU$, for every $\varepsilon>0$ there is an origin neighborhood $U\ni 1\in \bU$ so that
  \begin{equation*}
    \forall \text{compact}\quad\bK
    \xrightarrow[\quad\text{morphism}\quad]{\quad\varphi\quad}
    \bU
    ,\quad
    \varphi(\bK)\subset (\bG\cdot U)\cap S
  \end{equation*}
  the morphism (embedding) $\varphi$ is $\varepsilon$-close to a morphism (respectively embedding) $\bK\to \bG$.  \qedhere
  \end{enumerate}  
\end{theoremN}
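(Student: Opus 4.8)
The plan is to establish the two assertions in sequence, deriving the second from the first. Throughout, the role of the ad-boundedness hypothesis (\Cref{def:adbdd}) is to furnish a \emph{uniform} bound $M:=\sup_{s\in\bK}\|\Ad(\varphi(s))\|<\infty$ on the conjugation operators attached to the values in play, and this is precisely what makes the estimates below independent of the compact group $\bK$. For assertion (1), fix $\varepsilon>0$ and a $\delta$-morphism $\varphi\colon\bK\to S$. Since the defects $\varphi(s)\varphi(t)\varphi(st)^{-1}$ lie within $\delta$ of $1$, for $\delta$ small they sit in the domain of a fixed logarithm chart, so I may set $\beta(s,t):=\log\bigl(\varphi(s)\varphi(t)\varphi(st)^{-1}\bigr)\in\fu$ with $\|\beta\|\lesssim\delta$. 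Expanding the associativity identity $(\varphi(s)\varphi(t))\varphi(u)=\varphi(s)(\varphi(t)\varphi(u))$ through the Baker-Campbell-Hausdorff formula yields an \emph{approximate $2$-cocycle} relation for $\beta$, of the schematic form $\Ad(\varphi(s))\beta(t,u)-\beta(st,u)+\beta(s,tu)-\beta(s,t)=O(\|\beta\|^2)$, in which the $\Ad(\varphi(s))$-twist and all quadratic remainder terms are bounded uniformly in $\bK$ by $M$ and the structure constants of $\fu$.

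The correction is produced by \emph{Haar averaging}: writing $\mu$ for normalized Haar measure on $\bK$, I let $c(s)\in\fu$ be the average of $t\mapsto\beta(s,t)$ (with the $\Ad(\varphi(s))$-twist dictated by the cocycle relation) and replace $\varphi$ by $\psi(s):=\exp\bigl(-c(s)\bigr)\varphi(s)$. Integrating the approximate cocycle identity against $d\mu(u)$ shows that the linear part of the defect of $\psi$ cancels, so $\psi$ is an $O(\delta^2)$-morphism lying within $O(\delta)$ of $\varphi$, while the accumulated correction factors stay uniformly close to $1$ and hence keep every iterate in a fixed ad-bounded set. Iterating this Newton-type step produces maps $\varphi=\varphi_0,\varphi_1,\dots$ whose defects contract doubly-exponentially; for $\delta$ small relative to $\varepsilon$ and $M$ the scheme converges uniformly to a genuine morphism $\psi_\infty$, and the telescoped displacements sum to less than $\varepsilon$. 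The \textbf{main obstacle} is exactly here: one must verify that the BCH remainders are dominated \emph{uniformly over all compact $\bK$} by the single constant $M$, so that one threshold $\delta=\delta(\varepsilon,M)$ serves every $\bK$ at once. This uniform bookkeeping of the nonabelian error terms, underwritten by ad-boundedness, is the delicate heart of the argument.

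For assertion (2), fix a Banach-space complement $\fu=\fg\oplus\fm$ to $\fg:=Lie(\bG)$ and use the tubular chart $(g,X)\mapsto g\exp(X)$, $X\in\fm$, to define a retraction $\pi\colon\bG\cdot U\to\bG$ on a small enough $U$ by discarding the $\fm$-component. Given a morphism $\varphi$ with $\varphi(\bK)\subset(\bG\cdot U)\cap S$, the composite $\pi\circ\varphi$ differs from $\varphi$ pointwise by a factor $u_k\in U$, hence is a $\delta$-morphism into $\bG$ with $\delta\to0$ as $U\downarrow\{1\}$. Its values remain ad-bounded in $\bG$, since $\Ad_\bU(\pi\varphi(k))=\Ad_\bU(\varphi(k))\Ad_\bU(u_k)^{-1}$ with $\varphi(k)\in S$ and $\Ad(u_k)$ close to $\id$, and restriction to $\fg$ preserves the bound. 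Applying assertion (1) \emph{inside $\bG$} then yields a genuine morphism $\psi\colon\bK\to\bG$ with $d(\psi,\pi\varphi)$ small, whence $d(\psi,\varphi)<\varepsilon$ after shrinking $U$.

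Finally, to upgrade embeddings to embeddings I invoke the \emph{no-small-subgroups} property of the Banach Lie group $\bU$. If the morphism $\psi$ produced above had nontrivial kernel $N\trianglelefteq\bK$, then $\varphi(N)$ would be a compact subgroup lying in an arbitrarily small neighborhood of $1$, being $\varepsilon$-close to $\psi(N)=\{1\}$; the absence of small subgroups forces $\varphi(N)=\{1\}$, and hence $N=\{1\}$ by injectivity of $\varphi$. Thus for $\varepsilon$ and $U$ small enough $\psi$ is itself an embedding, completing both refinements of assertion (2).
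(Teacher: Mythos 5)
Your proposal follows essentially the same route as the paper: part (1) is the Haar-averaging/iteration scheme of \cite[Theorem 2.1]{2401.14929v2} (which the paper simply invokes, observing that its estimates depend only on $\delta$ and the ad-bound and are therefore uniform over all compact $\bK$), and part (2) is exactly the paper's argument --- retract along a tubular neighborhood $\bG\cdot\exp(W)\cong\bG\times W$ to get a continuous $\delta$-morphism into $\bG$, apply part (1) inside $\bG$, and use the no-small-subgroups property to preserve injectivity. The one caveat is that your sketch of part (1) reconstructs rather than cites that result and leaves the uniform BCH/convergence bookkeeping --- which you yourself flag as the delicate heart --- unverified, whereas the paper outsources precisely this step to the reference.
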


The arguments build on and extend earlier work in \cite{2212.06255v1,2401.14929v2}, and also recover some of their precursors in \cite{MR1503391}:
\begin{itemize}[wide]
\item\cite[Theorem 1]{MR1503391} (on whose proof \cite[Theorem 2.19]{2212.06255v1} relies), proving that whenever a Lie group with a faithful $d$-dimensional representation is arbitrarily approximable by finite almost-subgroups, then the finite groups in question can also be assumed embeddable in $GL_d$;

\item and \cite[Theorem 2]{MR1503391}, characterizing Lie groups finitely approximable in the above-mentioned fashion: they must have torus identity components. 
\end{itemize}
\Cref{re:recearlier} and the discussion preceding \Cref{cor:almostjordbdd} notes how the two follow from \Cref{cor:haveembquot,cor:almostjordbdd}. The preceding discussion dovetails with the other motivating factor: the celebrated {\it Jordan theorem} (\cite[Theorem 2]{MR3830471}, \cite[\S 1, statement I on p.281]{zbMATH03223755} \cite[Theorem 8.29]{ragh}, etc.) stating, in one version, that for a finite-dimensional connected Lie group there is a uniform bound on the index of an abelian normal subgroup of a finite subgroup. The result features in several ways in the sources mentioned above:
\begin{enumerate}[(a),wide]
\item for one thing, \cite[Theorem 2]{MR1503391} relies on Jordan's theorem;

\item\label{item:jord.2.19} for another, the aforementioned \cite[Theorem 2.19]{2212.06255v1} is an approximate version thereof: for every compact (hence Lie) subgroup $\bG\le \bU$ of a Banach Lie group, finite subgroups of $\bU$ sufficiently close to $\bG$ have abelian normal subgroups of index $\le N=N(\bG\le \bU)<\infty$. 
\end{enumerate}

\cite[Definition 2.1]{zbMATH05994630} (also \cite[Definition 1]{MR3830471}) abstracts away from the connected Lie framework in defining a group $\bG$ to be {\it Jordan} if
\begin{equation}\label{eq:jg}
  J_{\bG}
  :=
  \sup_{\substack{\bH\le \bG\\|\bH|<\infty}}
  \inf_{\substack{\bA\trianglelefteq \bH\\\bA\text{ abelian}}}
  |\bH/\bA|<\infty
  \quad
  \left(\text{the \emph{Jordan constant} of $\bG$}\right).
\end{equation}
\Cref{cor:th.2.19.noncpct,cor:th.2.19.cpct} are quantitative approximate Jordan-type results, the latter of which also recovers and strengthens \cite[Theorem 2.19]{2212.06255v1}.

\begin{theoremN}\label{thn:jrd}
  Let $\bG\le \bU$ be a Lie subgroup of a Banach Lie group.

  \begin{enumerate}[(1),wide]
  \item If $S\subseteq \bU$ is an ad-bounded subset, for some origin neighborhood $U\ni 1\in \bU$ we have
    \begin{equation*}
      \sup_{\substack{\bH\le \bU\\|\bH|<\infty\\\bH\subset \bG\cdot U\cap S}}
      \inf_{\substack{\bA\trianglelefteq \bH\\\bA\text{ abelian}}}
      |\bH/\bA|
      \le
      J_{\bG}.
    \end{equation*}
    
  \item If $\bG$ is compact, 
    \begin{equation*}
      \inf_{\text{nbhd }U\ni 1}
      \sup_{\substack{\bH\le \bU\\|\bH|<\infty\\\bH\subset \bG\cdot U}}
      \inf_{\substack{\bA\trianglelefteq \bH\\\bA\text{ abelian}}}
      |\bH/\bA|
      \le
      J_{\bG}.
    \end{equation*}
  \end{enumerate}
  \qedhere
\end{theoremN}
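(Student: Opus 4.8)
The plan is to deduce both statements directly from the embedding half of \Cref{thn:close.to}(2), using the observation that the index being optimized is an invariant of the abstract isomorphism type of $\bH$.

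For Part (1) I would argue as follows. Fix any $\varepsilon>0$ (its value will not matter) and let $U\ni 1$ be the origin neighborhood supplied by \Cref{thn:close.to}(2) for this $\bG\le\bU$, this $S$, and this $\varepsilon$. Given a finite subgroup $\bH\le\bU$ with $\bH\subset(\bG\cdot U)\cap S$, note that $\bH$ is compact and the inclusion $\bH\hookrightarrow\bU$ is an embedding whose image lies in $(\bG\cdot U)\cap S$; hence by \Cref{thn:close.to}(2) it is $\varepsilon$-close to an embedding $\psi\colon\bH\to\bG$. Because $\psi$ is injective, $\psi(\bH)$ is a finite subgroup of $\bG$ abstractly isomorphic to $\bH$, and the quantity $\inf_{\bA\trianglelefteq\bH,\ \bA\text{ abelian}}|\bH/\bA|$ depends only on this isomorphism type. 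It therefore equals the corresponding quantity for $\psi(\bH)\le\bG$, which is $\le J_{\bG}$ by the definition \Cref{eq:jg} of the Jordan constant. As $\bH$ was arbitrary, taking the supremum gives the claim.

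For Part (2) I would first manufacture an ad-bounded $S$ out of the compactness of $\bG$. Since $\bG$ is compact, $\Ad(\bG)\subset GL(\fu)$ is norm-bounded; choosing an origin neighborhood $U_0$ on which $\Ad$ is bounded (available by continuity of $\Ad$ at $1$, where $\Ad(1)=\id$) and using submultiplicativity of the operator norm, the set $S:=\bG\cdot U_0$ has $\Ad(S)\subseteq\Ad(\bG)\Ad(U_0)$ bounded, so $S$ is ad-bounded in the sense of \Cref{def:adbdd}. Applying Part (1) to this $S$ yields a neighborhood $U_1$, which after shrinking to $U_1\cap U_0$ satisfies $(\bG\cdot U_1)\cap S=\bG\cdot U_1$; thus the supremum over finite $\bH\subset\bG\cdot U_1$ is $\le J_{\bG}$. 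Since this supremum is monotone non-decreasing in $U$, the infimum over all origin neighborhoods is bounded above by its value at $U_1$, giving $\le J_{\bG}$.

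The main obstacle --- really the only nonroutine input --- is \Cref{thn:close.to}(2) itself; granting it, the rest is bookkeeping. Within that bookkeeping the one genuinely load-bearing point is that one must invoke the \emph{embedding} conclusion rather than the weaker morphism conclusion: a nearby homomorphism $\bH\to\bG$ that failed to be injective could collapse part of $\bH$ and thereby report a spuriously small index, so injectivity of $\psi$ is exactly what lets the Jordan invariant transfer intact from $\bH$ to a genuine finite subgroup of $\bG$. The ad-boundedness verification in Part (2) is the only other step needing a word, and it is immediate from compactness of $\Ad(\bG)$.
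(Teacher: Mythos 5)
Your proposal is correct and matches the paper's own argument: part (1) is exactly the proof of \Cref{cor:th.2.19.noncpct} (apply the embedding half of \Cref{th:mor.almost2gp} to the inclusion of a finite subgroup $\bH\subset(\bG\cdot U)\cap S$ and transfer the Jordan invariant along the resulting injection into $\bG$), and part (2) is deduced just as in the passage preceding \Cref{cor:th.2.19.cpct}, by observing that for compact $\bG$ some neighborhood $\bG\cdot U_0$ is itself ad-bounded and then specializing part (1). Your explicit remark that the \emph{embedding} (rather than mere morphism) conclusion is the load-bearing point is the same observation the paper makes implicitly by insisting the finite groups ``admit embeddings into $\bG$.''
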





\section{Almost embeddings and relative Jordan properties}\label{se:main}

We freely refer to distance estimates in both a Banach Lie group $\bU$ and Lie algebra $\fu:=Lie(\bU)$: the former is completely metrizable \cite[\S III.1.1, Proposition 1]{bourb_lie_1-3} (the metric can always be chosen left or right-invariant if desired \cite[\S IX.3.1, Proposition 2]{bourb_top_en_2}) and the latter admits \cite[\S III.4.3, Theorem 4 and \S III.6.4, Theorem 4]{bourb_lie_1-3} a Banach norm $\|\cdot\|$ compatible with the Lie-algebra structure in the sense that
\begin{equation*}
  \|[x,y]\|\le \|x\|\cdot \|y\|
  ,\quad\forall x,y\in \fu.
\end{equation*}
Recall also (\cite[\S III.4.3, Theorem 4 and \S III.6.4, Theorem 4]{bourb_lie_1-3}, \cite[\S IV]{neeb-inf}, etc.) that small neighborhoods of $0\in \fu$ and $1\in \bU$ are analytically isomorphic via the {\it exponential map}
\begin{equation*}
  \fu
  \xrightarrow{\quad\exp\text{ or }e^{\cdot}\quad}
  \bU.
\end{equation*}

As we several times have to consider locally compact subgroups $\bG\le \bU$ of Banach Lie groups (i.e. locally compact in the inherited subspace topology), we remind \cite[Theorem IV.3.16]{neeb-lc} the reader that all such are finite-dimensional {\it Lie subgroups} in the strong sense of \cite[\S III.1.3, Definition 3]{bourb_lie_1-3} (and \cite[Remark IV.4(c)]{neeb-inf}):
\begin{itemize}[wide]
\item automatically closed;

\item and finite-dimensional Lie groups in their own right;

\item with the embedding identifying the Lie subalgebra $Lie(\bG)\le Lie(\bU)$ as a {\it complemented} \cite[\S 4.9]{nb_tvs} subspace of the ambient Banach space (finite-dimensional Banach subspaces being complemented \cite[Theorem 7.3.5]{nb_tvs}). 
\end{itemize}

We first record the following variant of \cite[Theorem 2.1]{2401.14929v2}. 

\begin{theorem}\label{th:almostmor}
  Let $\bU$ be a Banach Lie group and $S\subseteq \bU$ an ad-bounded subset. For any $\varepsilon>0$ there is some $\delta>0$ such that {\it $\delta$-morphisms}
  \begin{equation*}
    \text{compact}\quad\bK
    \xrightarrow{\quad\text{continuous}\quad}
    S
    \subseteq
    \bU
  \end{equation*}
  in the sense of \Cref{eq:varphideltaclose} are $\varepsilon$-close in the space $\cat{Cont}(\bK\to \bU)$ of continuous maps to morphisms.
\end{theorem}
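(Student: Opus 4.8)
The plan is to realize the desired morphism as the limit of a rapidly convergent (Newton-type) averaging iteration, using the normalized Haar measure $dt$ on the compact group $\bK$ together with the exponential chart of $\bU$; ad-boundedness of $S$ is exactly what keeps every estimate uniform in the face of a possibly non-compact, far-from-identity image $\varphi(\bK)\subset S$.

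First I would normalize matters: choose the metric $d$ on $\bU$ left-invariant, fix the compatible Banach norm $\|\cdot\|$ on $\fu$ so that $\exp$ is a bi-Lipschitz chart near $0\in\fu,\ 1\in\bU$, and set $M:=\sup_{s}\max\big(\|\Ad(\varphi(s))\|,\|\Ad(\varphi(s))^{-1}\|\big)<\infty$; this is finite because $\Ad(S)$ is bounded, $\Ad(\varphi(s))^{-1}=\Ad(\varphi(s)^{-1})$, and $\varphi(s)^{-1}$ is $O(\delta)$-close to $\varphi(s^{-1})\in S$. Left-invariance turns the defect bound \Cref{eq:varphideltaclose} into $d\big(\varphi(t)^{-1}\varphi(s)^{-1}\varphi(st),1\big)<\delta$, so for small $\delta$ the element $u(s,t):=\varphi(t)^{-1}\varphi(s)^{-1}\varphi(st)$ lies in the chart and $\beta(s,t):=\log u(s,t)$ satisfies $\|\beta(s,t)\|\le C\delta$. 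Conjugating gives $\varphi(s)^{-1}\varphi(st)\varphi(t)^{-1}=\exp\!\big(\Ad(\varphi(t))\beta(s,t)\big)$, whose exponent has norm $\le MC\delta$; this is the crucial use of ad-boundedness, showing that the clustered family $\{\varphi(st)\varphi(t)^{-1}\}_{t}$ sits within $O(M\delta)$ of $\varphi(s)$.

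I would then define the averaged map (the ``center of mass'' of that family in the chart based at $\varphi(s)$)
\[
  \psi(s):=\varphi(s)\exp\Big(\textstyle\int_{\bK}\Ad(\varphi(t))\,\beta(s,t)\,dt\Big),
\]
whose Bochner integral exists and obeys $d(\psi(s),\varphi(s))\le C'M\delta$, so $\psi$ is continuous and $O(M\delta)$-close to $\varphi$. The heart of the argument is that $\psi$ is a much better approximate morphism, with defect $O(\delta^2)$. To see this I would expand $\psi(s)\psi(t)$ and $\psi(st)$ in the chart to first order: the correction $c(s)=\int_{\bK}\Ad(\varphi(t))\beta(s,t)\,dt$ enters the new defect through $c(st)-c(t)-\Ad(\varphi(t)^{-1})c(s)$, and substituting the definition of $c$, using $\varphi(t)^{-1}\varphi(r)=\varphi(t^{-1}r)\exp(O(\delta))$, the invariance of $dt$ (to rewrite $\int f(t^{-1}r)\,dr$ as $\int f(r)\,dr$), and the approximate $2$-cocycle identity for $\beta$ coming from associativity $\varphi(s(tr))=\varphi((st)r)$, the linear part cancels exactly. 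What survives is genuinely quadratic, i.e. the new defect $\beta'$ satisfies $\|\beta'\|\le C''M^{2}\delta^{2}$, the constants being uniform because every conjugation and every Baker--Campbell--Hausdorff remainder (a convergent series of iterated brackets, hence of $\mathrm{ad}$-terms) is controlled by $M$ and $\|\cdot\|$.

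Finally I would iterate, setting $\varphi_0=\varphi$ and letting $\varphi_{n+1}$ be the averaged map built from $\varphi_n$, which yields defects $\delta_{n+1}\le C''M^2\delta_n^2$, hence $\delta_n\to0$ superexponentially once $\delta$ is small, while $d(\varphi_{n+1},\varphi_n)\le C'M\delta_n$ is summable; so $(\varphi_n)$ is uniformly Cauchy in $\cat{Cont}(\bK\to\bU)$ and its limit $\Psi$ is a continuous genuine morphism with $d(\Psi,\varphi)\le\sum_n C'M\delta_n<\varepsilon$ for $\delta$ chosen small. One also checks that the iteration stays admissible: since $\Ad(\varphi_{n+1}(s))=\Ad(\varphi_n(s))\exp(\mathrm{ad}\,c_n(s))$ with $\|c_n(s)\|\le C'M\delta_n$, the ad-bound degrades only by the convergent product $\prod_n e^{C'M\delta_n}$, so all iterates remain uniformly ad-bounded and within the chart's domain. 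I expect the main obstacle to be precisely this quadratic-improvement step: pinning down the approximate cocycle identity with the correct $\Ad\circ\varphi$ twist and verifying, through the Haar-invariance cancellation, that the averaged correction annihilates the first-order defect uniformly in $s,t$ --- this is where ad-boundedness is indispensable and where the bookkeeping of the BCH remainders must be carried out with care.
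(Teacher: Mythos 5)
Your proposal is correct and follows essentially the same route as the paper: the paper's proof simply invokes \cite[Theorem 2.1]{2401.14929v2}, whose argument is exactly the Haar-averaging/Newton iteration you describe (average the twisted defect $\Ad(\varphi(t))\beta(s,t)$ over $\bK$, get a quadratically smaller defect, iterate), with the observation that all estimates depend only on the ad-bound for $S$ and on $\delta$, hence are uniform over arbitrary compact domains $\bK$. You have in effect reconstructed the cited proof in detail, including the one point the paper emphasizes --- that ad-boundedness is what makes the constants independent of $\bK$ and of where in $\bU$ the image sits.
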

\begin{proof}
  The already-mentioned \cite[Theorem 2.1]{2401.14929v2} proves essentially the claim made here, except the domain compact group $\bK$ is fixed. That proof proceeds by deforming $\delta$-morphisms analytically into genuine morphisms by averaging almost-cocycles against the Haar probability measure of the domain group $\bK$. The closeness estimates depend only on a uniform bound on $\Ad s$, $s\in \bK$ and the $\delta$ of \Cref{eq:varphideltaclose}, so go through uniformly for arbitrary domains $\bK$ so long as $\delta$ and $S$ are kept fixed.
\end{proof}

A follow-up and consequence of \Cref{th:almostmor} concerns a slightly different setup: given genuine morphisms that ``just barely miss'' a Lie subgroup of a Banach Lie group, one might want those deformed into morphisms taking valued in said subgroup. 

\begin{theorem}\label{th:mor.almost2gp}
  Let $\bG\le \bU$ be a Lie subgroup of a Banach Lie group and $S\subseteq \bU$ an ad-bounded subset.

  For every $\varepsilon>0$ there is an origin neighborhood $U\ni 1\in \bU$ so that
  \begin{equation*}
    \forall \text{compact}\quad\bK
    \xrightarrow[\quad\text{morphism}\quad]{\quad\varphi\quad}
    \bU
    ,\quad
    \varphi(\bK)\subset (\bG\cdot U)\cap S
  \end{equation*}
  the morphism (embedding) $\varphi$ is $\varepsilon$-close to a morphism (respectively embedding) $\bK\to \bG$.
\end{theorem}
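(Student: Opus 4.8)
The plan is to reduce to \Cref{th:almostmor} by first projecting $\varphi$ onto $\bG$ along a complement of $\fg:=Lie(\bG)$ in $\fu$, producing an approximate morphism valued in $\bG$, and then correcting that to a genuine morphism. Fix a closed complement $\fm$ with $\fu=\fg\oplus\fm$ (available because $\fg$ is complemented). Near $1$ the map $(\xi,\eta)\mapsto \exp(\xi)\exp(\eta)$, with $\xi\in\fg$ and $\eta\in\fm$, is an analytic isomorphism from a neighborhood of the origin onto a neighborhood $V\ni 1$, so one may set $r_0(v):=\exp(\xi)\in\bG$ for $v=\exp(\xi)\exp(\eta)\in V$. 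Uniqueness of this decomposition yields the left equivariance $r_0(hv)=h\,r_0(v)$ for $h\in\bG$ and $v\in V$ near $1$, which allows $r_0$ to be extended to a retraction $r\colon\bG\cdot V\to\bG$ by $r(gv):=g\,r_0(v)$, well defined and satisfying $r(gw)=g\,r(w)$ for all $g\in\bG$ and $w\in V$.

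First I would choose $U\subseteq V$ and set $\psi:=r\circ\varphi\colon\bK\to\bG$; writing $\varphi(s)=g_su_s$ with $g_s:=\psi(s)\in\bG$ and $u_s\in U$, the claim is that $\psi$ is a $\delta$-morphism with $\delta\to 0$ as $U\to\{1\}$. Since $\varphi$ is a genuine morphism, $\varphi(s)\varphi(t)=g_sg_t\cdot w$ with $w=(g_t^{-1}u_sg_t)u_t$, so left equivariance gives $\psi(st)=r(\varphi(s)\varphi(t))=g_sg_t\,r_0(w)$ while $\psi(s)\psi(t)=g_sg_t$; with the metric chosen left-invariant, $d(\psi(st),\psi(s)\psi(t))=d(r_0(w),1)$, which is small once $w$ is small.

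The hard part, and the one place where ad-boundedness is indispensable, is showing that $w$ is uniformly small over all $s,t$. The factor $u_t$ lies in $U$, but $g_t^{-1}u_sg_t=\exp(\Ad(g_t^{-1})\log u_s)$ involves conjugation by $g_t\in\bG$, which need not be anywhere near $1$; this stays controlled precisely because $\varphi(t)\in S$ forces $\Ad(\varphi(t))$, and hence (by continuity, after shrinking $U$) $\Ad(g_t)$ and $\Ad(g_t^{-1})$, to be uniformly bounded. The same observation shows $\psi(\bK)$ is ad-bounded in $\bG$, since $\Ad_\bG(\psi(k))=\Ad_\bU(\psi(k))|_\fg$ is close to the bounded operator $\Ad(\varphi(k))$, with a bound governed by that of $S$ alone; thus the $\delta$ required below depends only on the ad-bound of $S$ and not on $\varphi$.

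With these in hand I would apply \Cref{th:almostmor} to the Banach Lie group $\bG$, the ad-bounded set $\psi(\bK)$, and a target precision $\varepsilon'$: for $U$ small enough $\psi$ is a $\delta$-morphism for the $\delta$ that theorem demands, giving a morphism $\Phi\colon\bK\to\bG$ with $\sup_k d(\Phi(k),\psi(k))<\varepsilon'$. As $d(\psi(k),\varphi(k))=d(1,u_k)$ is at most the size of $U$, choosing $U$ small and $\varepsilon'<\varepsilon/2$ makes $\Phi$ the desired $\varepsilon$-close morphism into $\bG$. For the embedding assertion, assume $\varphi$ injective and suppose $\Phi$ is not: then $\varphi(\ker\Phi)$ is a nontrivial compact subgroup of $\bU$, and since Banach Lie groups have no small subgroups—a sufficiently small exponential chart $W\ni 1$ contains no nontrivial subgroup—this subgroup must meet the complement of $W$, yielding $k\in\ker\Phi$ with $d(\varphi(k),1)\ge c_0>0$ while $\Phi(k)=1$. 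This contradicts $\varepsilon$-closeness once $\varepsilon<c_0$, so $\Phi$ is an injective morphism of compact groups, hence an embedding.
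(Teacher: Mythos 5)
Your proposal is correct and follows essentially the same route as the paper: retract $\varphi$ into $\bG$ via a tubular neighborhood $\bG\cdot\exp(W)\cong\bG\times W$, use ad-boundedness of $S$ (together with $\varphi(t)^{-1}=\varphi(t^{-1})\in S$) to control the conjugated error term and conclude the retracted map is a $\delta$-morphism into $\bG$, apply \Cref{th:almostmor} with $\bG$ as ambient group, and settle the embedding claim by the no-small-subgroups property. Your explicit remark that the ad-bound on $\psi(\bK)$, and hence the requisite $\delta$, depends only on $S$ and not on $\varphi$ is a welcome extra precision that the paper leaves implicit.
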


\begin{remark}\label{re:cpctbetter}
  For compact $\bG$ (the case \cite{2212.06255v1} overwhelmingly focused on) the statement can be streamlined: no reference to $S$ need be made at all, since sufficiently small neighborhoods of compact subsets of $\bU$ are of course already ad-bounded. 
\end{remark}


\pf{th:mor.almost2gp}
\begin{th:mor.almost2gp}
  \begin{enumerate}[(I),wide]
  \item {\bf : Plain-morphism version.} We will apply \Cref{th:almostmor} to $\bK$ and $\bG$ in place of the $\bK$ and $\bU$ of the earlier statement; the idea is to argue that the present hypothesis ensures, assuming $V$ small enough, the existence of
    \begin{equation}\label{eq:psi.close2phi}
      \bK
      \xrightarrow[\quad\text{continuous $\delta$-morphism}\quad]{\quad\psi\quad}
      \bG
      \quad
      \text{$\frac{\varepsilon}{2}$-close to $\varphi$}
    \end{equation}
    \Cref{th:almostmor} then further deforms {\it that} into a true morphism $\bK\to \bG$ $\frac{\varepsilon}{2}$-close to it, finishing the proof.

    To produce \Cref{eq:psi.close2phi}, I claim it is enough to select {\it any} continuous map $\bK\to \bG$ sufficiently uniformly close to the original $\varphi$: the $\delta$-morphism property will follow from that closeness and the hypothesis. Indeed, suppose
    \begin{equation*}
      \psi(s) = e^{\alpha(s)}\cdot \varphi(s)
      ,\quad
      s\in \bK
    \end{equation*}
    for $\bK\xrightarrow{\alpha}\fu$ taking sufficiently small(-norm) values. We then have
    \begin{equation*}
      \begin{aligned}
        \psi(s)\psi(t)
        &=
          e^{\alpha(s)}\cdot e^{\Ad_{\varphi(s)}(\alpha(t))}\cdot \varphi(s)\varphi(t)\\
        &=e^{\alpha(s)}\cdot e^{\Ad_{\varphi(s)}(\alpha(t))}\cdot \varphi(st)
          \quad\text{($\varphi$ being a morphism) and}\\
        \psi(st)
        &=
          e^{\alpha(st)}\cdot \varphi(st).
      \end{aligned}
    \end{equation*}
    These will be uniformly close, given the assumed boundedness of $\left\{\Ad_{\varphi(x)}\right\}_{s\in \bK}$.

    The problem, then, has been reduced to the following claim (in which ad-boundedness no longer plays a role): if $U$ is a sufficiently small identity neighborhood in $\bU$, morphisms $\bK\to \bU$ taking values in $\bG\cdot U$ are $\varepsilon$-close to continuous maps $\bK\to \bG$. To conclude, simply observe that there is a {\it tubular neighborhood} \cite[\S IV.5]{lang-fund} of $\bG\le \bU$ of the form
    \begin{equation*}
      \bG\cdot U = \bG\cdot \exp(W)\cong \bG\times W
    \end{equation*}
    for a neighborhood $W\ni 0$ in a summand of $\fu$ complementing $Lie(\bG)\le \fu$. Naturally, continuous maps $\bK\to \bG\times W$ with uniformly small $W$-component are uniformly close to maps into $\bG\cong \bG\times \{0\}\subset \bG\times W$.

  \item {\bf : Embeddings.} Like all Banach Lie groups, $\bU$ has {\it no small subgroups} \cite[Theorem III.2.3]{neeb-lc}: there is an origin neighborhood $W\ni 1\in \bU$ containing no non-trivial subgroups. If $\varepsilon>0$ is sufficiently small, mutually $\varepsilon$-close morphisms $\bK\to \bU$ will simultaneously send subgroups of $\bK$ to $W$ (or not), so will have equal kernels.  \qedhere
  \end{enumerate}  
\end{th:mor.almost2gp}

As mentioned in the Introduction, the preceding remarks were initially partly motivated by considerations on and around variants of Jordan's theorem.  We are concerned here with ``relative'' versions of the invariant \Cref{eq:jg}, pertinent to embeddings of topological groups.


\begin{definition}\label{def:reljg}
  Let $\bG\le \bU$ be a subgroup of a topological group.
  \begin{enumerate}[(1),wide]
  \item For a neighborhood $V\supset \bG$ of $\bG$ in $\bU$
    \begin{equation*}
      J_{\bG\subset V}
      :=
      \sup_{\substack{\bH\le \bU\\|\bH|<\infty\\\bH\subset V}}
      \inf_{\substack{\bA\trianglelefteq \bH\\\bA\text{ abelian}}}
      |\bH/\bA|
    \end{equation*}
    is the {\it $(\bG\subset V)$-relative (or approximate) Jordan constant}. 

  \item Similarly, the {\it $(\bG\le \bU)$-relative Jordan constant} (attached to the embedding) is
    \begin{equation*}
      J_{\bG\le \bU}
      :=
      \inf_{\substack{\text{nbhd }V\supset \bG}}J_{\bG\subset V}
    \end{equation*}

  \item More generally, for any class $\cO$ of neighborhoods of $\bG$ in $\bU$,
    \begin{equation*}
      J^{\cO}_{\bG\le \bU}
      :=
      \inf_{\substack{V\in \cO}}J_{\bG\subset V}
    \end{equation*}
    
  \item A variant, decorated with an additional `$u$' subscript for `uniform', is the {\it $(\bG\le \bU)$-relative uniform Jordan constant}:
    \begin{equation*}
      J^u_{\bG\le \bU}      
      :=
      \inf_{\substack{\text{nbhd }U\ni 1}}J_{\bG\subset \bG\cdot U}
      =
      J^{\left\{\bG\cdot U\right\}_U}_{\bG\le \bU}
    \end{equation*}
    The notion has an obvious right-handed counterpart involving $U\cdot \bG$ instead. 
  \end{enumerate}
  The subgroup (or embedding) is {\it $(\bullet,\square)$-relatively Jordan} (or just plain {\it relatively Jordan} when the context is understood) if the respective constant $J^{\square}_{\bullet}$ is finite. 
\end{definition}

\begin{remark}\label{re:obviousineq}
  Naturally,
  \begin{equation*}
    J_{\bG\subset V}\ge J_{\bG}
    ,\quad\forall\text{ nbhd }V\supset \bG
    \quad\text{and hence}\quad    
    J_{\bG\le \bU}
    \ge J_{\bG}    
  \end{equation*}
  in full generality: $J_{\bG\subset V}$ takes a supremum over a larger collection of finite groups. Also note that, the infimum defining $J^{\cO}_{\bG\le U}$ being narrower the smaller $\cO$ is, the invariant increases (weakly) with decreasing $\cO$ (under inclusion). 
\end{remark}

In this language, the approximate Jordan-type result in \cite[Theorem 2.19]{2212.06255v1} reads: 
\begin{equation}\label{eq:th219.summ}
  \forall\text{embedding}
  \quad
  \bG
  \text{ (compact)}
  \le
  \bU
  \text{ (Banach Lie)}
  \quad
  \text{is relatively Jordan}.
\end{equation}
That proof relies on Turing's work \cite{MR1503391} on {\it (finitely) approximable} Lie groups: those $\bG$ admitting, for every $\varepsilon>0$, $\varepsilon$-morphisms
\begin{equation*}
  \bK
  \xrightarrow{\quad\varphi\quad}
  \bG
  ,\quad
  \bG=\text{$\varepsilon$-neighborhood of the image }\varphi(\bK).
\end{equation*}
\cite[Theorem 2]{MR1503391} asserts (essentially, in slightly different phrasing) that all such $\bG$ have compact abelian identity component (see also \cite[Theorems 6 and 7]{zbMATH06988579}, which recover that result); the proof of \cite[Theorem 2.19]{2212.06255v1} relies on (the proof of) the auxiliary result \cite[Theorem 1]{MR1503391}, asserting that an approximable Lie group with a $d$-dimensional linear representation is also approximable by finite groups with the same property. A sketch of the logical chain, then, would be as follows:
\begin{itemize}[wide]
\item \cite[Theorem 2]{MR1503391} relies on the classical Jordan theorem, as well as \cite[Theorem 1]{MR1503391};
\item the latter's proof is in turn adapted in \cite[Theorem 2.19]{2212.06255v1}, obtaining the relative/approximate Jordan result \Cref{eq:th219.summ}. 
\end{itemize}
The preceding material now affords some rearrangement: \Cref{eq:th219.summ} need not rely on either \cite[Theorem 1 or 2]{MR1503391} (or repurposed proofs thereof), as all of these follow from \Cref{th:almostmor}. We record, for instance, the following self-evident consequence of that earlier result. 

The $\varepsilon$-closeness referred to in the statement, between subsets of a Banach Lie group, is assessed in the {\it Hausdorff metric} (\cite[\S 45, Exercise 7]{mnk}, \cite[Definition 7.3.1]{bbi}) on the collection of closed subsets of a metric space $(X,d)$ (here $\bU$ equipped with a complete metric topologizing it):
\begin{equation*}
  \begin{aligned}
    d_{\cat{Haus}}(A,B)
    &:=
      \inf\left\{\varepsilon>0\ |\
      A\subseteq B_{\varepsilon}
      \quad\text{and}\quad
      B\subseteq A_{\varepsilon}
      \right\}
    \\
    \bullet_{\varepsilon}
    &:=
      \text{closed $\varepsilon$-neighborhood} \{x\ |\ d(x,\bullet)\le \varepsilon\}.
  \end{aligned}    
\end{equation*}

\begin{corollary}\label{cor:haveembquot}
  Let $\bU$ be a Banach Lie group and $S\subseteq \bU$ an ad-bounded subset. There is a function
  \begin{equation*}
    \text{some interval }(0,a)
    \xrightarrow[\lim_0\delta=0]{\quad\delta\quad}
    (0,a)
  \end{equation*}  
  so that the image of every compact-domain $\delta(\varepsilon)$-morphism $\bK\xrightarrow{\varphi}\bU$ valued in $S\subset \bU$ is $\varepsilon$-close to a compact (hence Lie) subgroup of $\bU$ isomorphic to a quotient of $\bK$.  \qedhere
\end{corollary}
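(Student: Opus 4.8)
The plan is to deduce this directly from \Cref{th:almostmor}, the only genuine content being the passage from uniform ($\cat{Cont}(\bK\to\bU)$-)closeness of maps to Hausdorff closeness of their images. First I would package the existential $\delta$ of \Cref{th:almostmor} into an honest function of $\varepsilon$. For each $\varepsilon$ in a small interval $(0,a)$ that theorem supplies some $\delta_\varepsilon>0$ with the property that every $\delta_\varepsilon$-morphism $\bK\to S$ is $\varepsilon$-close to a morphism; replacing $\delta_\varepsilon$ by $\min(\delta_\varepsilon,\varepsilon)$ is legitimate, since shrinking $\delta$ only strengthens the hypothesis of being a ``$\delta$-morphism'', and it guarantees $\delta(\varepsilon)\le\varepsilon$, so that $\delta$ maps $(0,a)$ into $(0,a)$ with $\lim_0\delta=0$ as required.

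Next, fixing such an $\varepsilon$ and a compact-domain $\delta(\varepsilon)$-morphism $\bK\xrightarrow{\varphi}\bU$ with $\varphi(\bK)\subset S$, I would apply \Cref{th:almostmor} to obtain a genuine morphism $\bK\xrightarrow{\psi}\bU$ with $\sup_{s\in\bK}d(\varphi(s),\psi(s))\le\varepsilon$. Being a continuous homomorphism out of a compact group, $\psi$ has image $\psi(\bK)$ a compact subgroup of $\bU$, isomorphic (via the first isomorphism theorem for topological groups) to the quotient $\bK/\ker\psi$; and as recalled just before the statement (\cite[Theorem IV.3.16]{neeb-lc}, \cite[\S III.1.3, Definition 3]{bourb_lie_1-3}), every compact subgroup of a Banach Lie group is automatically a finite-dimensional Lie subgroup. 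Thus $\psi(\bK)$ is the sought compact (hence Lie) subgroup, isomorphic to a quotient of $\bK$.

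It then remains to convert the uniform bound into a Hausdorff estimate. Since $\bK$ is compact and $\varphi$ continuous, $\varphi(\bK)$ is compact, hence closed, so $d_{\cat{Haus}}$ is defined on the pair $(\varphi(\bK),\psi(\bK))$. The inequality $d(\varphi(s),\psi(s))\le\varepsilon$, valid for every $s\in\bK$, yields simultaneously $\varphi(s)\in(\psi(\bK))_\varepsilon$ and $\psi(s)\in(\varphi(\bK))_\varepsilon$; hence $\varphi(\bK)\subseteq(\psi(\bK))_\varepsilon$ and $\psi(\bK)\subseteq(\varphi(\bK))_\varepsilon$, i.e.\ $d_{\cat{Haus}}(\varphi(\bK),\psi(\bK))\le\varepsilon$, which is exactly the asserted $\varepsilon$-closeness.

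As the author flags, the result is self-evident once \Cref{th:almostmor} is available, so I anticipate no substantive obstacle; the ``hard part'' is merely bookkeeping. The two points needing any care are precisely the ones above: arranging $\delta$ to be a genuine function with $\lim_0\delta=0$ and range inside $(0,a)$ (handled by the free shrinking of $\delta$), and checking that $\varphi(\bK)$ is closed so that the Hausdorff metric even applies (immediate from compactness of $\bK$). If a strict inequality $d_{\cat{Haus}}<\varepsilon$ is preferred over $\le\varepsilon$, it suffices to run the entire argument with $\tfrac{\varepsilon}{2}$ in place of $\varepsilon$.
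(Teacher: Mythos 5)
Your proposal is correct and follows exactly the route the paper intends: the paper offers no separate proof, presenting the corollary as a ``self-evident consequence'' of \Cref{th:almostmor}, and your write-up simply supplies the routine bookkeeping (packaging $\delta$ as a function, identifying $\psi(\bK)\cong\bK/\ker\psi$ as a compact subgroup, and converting the uniform bound into a Hausdorff estimate). Nothing is missing.
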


\begin{remark}\label{re:recearlier}
  \Cref{cor:haveembquot} does indeed recover \cite[Theorem 1]{MR1503391}: the image of a $\delta$-morphism $\bK\to \bU$ with finite (more generally, compact) domain is $\varepsilon$-close to a subgroup of $\bG$, which subgroup of course inherits the representations of $\bG$ by restriction.
\end{remark}

As to Jordan-type results and properties, note the following consequence (a {\it $\delta$-subgroup} is a subset with an abstract group structure so that the embedding is a $\delta$-morphism). 

\begin{corollary}\label{cor:almostjordbdd}
  For a Banach Lie group $\bU$, ad-bounded subset $S\subseteq \bU$ and sufficiently small $\delta$ a finite $\delta$-subgroup of $S$ is $\varepsilon$-close to a genuine subgroup.  \qedhere
\end{corollary}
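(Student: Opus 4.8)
The plan is to recognize a finite $\delta$-subgroup as a particularly simple instance of a compact-domain $\delta$-morphism valued in $S$, and then to invoke \Cref{cor:haveembquot} essentially verbatim. Concretely, let a finite set $H\subseteq S$ carry an abstract group structure, encoded by a finite group $\bH$ whose underlying set is $H$, such that the inclusion $\iota\colon\bH\hookrightarrow\bU$ is a $\delta$-morphism in the sense of \Cref{eq:varphideltaclose}. Being finite, $\bH$ is compact in its discrete topology, and $\iota$, as a map out of a discrete space, is automatically continuous; thus $\iota$ is precisely a continuous, compact-domain $\delta$-morphism $\bH\to S\subseteq\bU$, i.e.\ the kind of input \Cref{cor:haveembquot} accepts.

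The main step is then immediate. Fixing $\varepsilon>0$, I would set $\delta:=\delta(\varepsilon)$ for the function furnished by \Cref{cor:haveembquot} (any smaller value works as well, since a $\delta'$-morphism with $\delta'\le\delta(\varepsilon)$ is a fortiori a $\delta(\varepsilon)$-morphism). That corollary then asserts that the image $\iota(\bH)=H$ is $\varepsilon$-close, in the Hausdorff metric, to a compact (hence Lie, and here finite) subgroup $\bG'\le\bU$ isomorphic to a quotient of $\bH$. Since $\bG'$ is a genuine subgroup of $\bU$, this is exactly the desired conclusion, with ``sufficiently small $\delta$'' made quantitative by $\lim_0\delta=0$.

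Should one prefer to avoid the intermediate corollary, the same conclusion follows directly from \Cref{th:almostmor}: it deforms $\iota$ into a genuine morphism $\psi\colon\bH\to\bU$ that is $\varepsilon$-close to $\iota$ in $\cat{Cont}(\bH\to\bU)$, meaning $d(\psi(h),h)\le\varepsilon$ uniformly in $h\in\bH$. Its image $\psi(\bH)$ is a finite subgroup, and the uniform bound gives $H\subseteq(\psi(\bH))_{\varepsilon}$ and $\psi(\bH)\subseteq H_{\varepsilon}$, whence $d_{\cat{Haus}}(H,\psi(\bH))\le\varepsilon$.

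I do not expect a genuine obstacle: all the analytic content is already packaged in \Cref{th:almostmor} and \Cref{cor:haveembquot}, and what remains is only to check that the definition of a finite $\delta$-subgroup fits their hypotheses. The two points to keep straight are that ``$\varepsilon$-close'' for subsets refers to the Hausdorff metric introduced just before \Cref{cor:haveembquot}, and that the $\delta$-morphism inequality for $\iota$ compares the abstract product on $\bH$ with the ambient multiplication in $\bU$ --- a harmless bookkeeping distinction rather than a difficulty.
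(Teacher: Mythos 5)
Your proposal is correct and matches the paper's intent exactly: the corollary carries a \qedhere with no separate proof precisely because it is the specialization of \Cref{cor:haveembquot} (equivalently, of \Cref{th:almostmor}) to the inclusion of a finite discrete group, which is a continuous compact-domain $\delta$-morphism valued in $S$. Your bookkeeping with the Hausdorff metric and the image of the straightened morphism is the same implicit argument the paper relies on.
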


Such a subgroup will of course have an abelian normal subgroup of index $\le \bJ_{\bU}$, which is finite \cite[Theorem 2]{MR3830471} as soon as $\bU$ is finite-dimensional with bounded component group. Since furthermore
\begin{itemize}[wide]
\item for compact $\bU$ we may take $S=\bU$;
\item and finitely approximable Lie groups are indeed compact (by \cite[Theorem 39.9]{wil_top}, being totally bounded by hypothesis and completely metrizable as all Lie groups are \cite[\S III.1.1, Proposition 1]{bourb_lie_1-3}),
\end{itemize}
\cite[Theorem 2]{MR1503391} follows. 

Turning to the setup of \Cref{th:mor.almost2gp}, we have the following strengthening (via the specialization \Cref{cor:th.2.19.cpct}) of \cite[Theorem 2.19]{2212.06255v1}.

\begin{corollary}\label{cor:th.2.19.noncpct}
  Let $\bG\le \bU$ be a Lie subgroup of a Banach Lie group and $S\subseteq \bU$ an ad-bounded subset.

  For some origin neighborhood $U\ni 1\in \bU$ we have
  \begin{equation}\label{eq:cor:th.2.19.noncpct:sttmt}
    \sup_{\substack{\bH\le \bU\\|\bH|<\infty\\\bH\subset \bG\cdot U\cap S}}
    \inf_{\substack{\bA\trianglelefteq \bH\\\bA\text{ abelian}}}
    |\bH/\bA|
    \le
    J_{\bG}.
  \end{equation}
\end{corollary}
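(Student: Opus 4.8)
The plan is to funnel each finite subgroup $\bH$ competing in the supremum through the embedding half of \Cref{th:mor.almost2gp}, thereby realizing $\bH$ as a finite subgroup of $\bG$, and then to read off the index bound directly from the definition of $J_{\bG}$. First I would fix once and for all an $\varepsilon>0$ small enough that the embedding conclusion of \Cref{th:mor.almost2gp} is in force — recall that part (II) of its proof needs $\varepsilon$ below the no-small-subgroups threshold so that $\varepsilon$-close morphisms share kernels and genuine embeddings are produced — and then let $U\ni 1$ be the origin neighborhood that \Cref{th:mor.almost2gp} supplies for this $\varepsilon$, the Lie subgroup $\bG\le\bU$, and the ad-bounded set $S$. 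The crucial feature is that this single $U$ depends only on $\varepsilon$, $\bG$ and $S$, not on any individual subgroup, so it serves uniformly for every $\bH$ appearing in the supremum.

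Next I would take an arbitrary finite subgroup $\bH\le\bU$ with $\bH\subset\bG\cdot U\cap S$. Being finite, $\bH$ is compact (indeed discrete in the subspace topology), and the inclusion $\bH\hookrightarrow\bU$ is a bona fide morphism — in fact an embedding — whose image lies in $(\bG\cdot U)\cap S$. \Cref{th:mor.almost2gp} therefore yields an embedding $\psi\colon\bH\to\bG$ that is $\varepsilon$-close to this inclusion. What matters for the conclusion is not the closeness but the bare existence of $\psi$: it exhibits $\bH$, up to isomorphism, as a finite subgroup $\psi(\bH)\le\bG$.

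The final step is purely group-theoretic and is where the Jordan constant enters. Since $\psi(\bH)\le\bG$ is finite, the definition \Cref{eq:jg} of $J_{\bG}$ furnishes an abelian normal subgroup $\bA'\trianglelefteq\psi(\bH)$ with $|\psi(\bH)/\bA'|\le J_{\bG}$; pulling $\bA'$ back along the isomorphism $\psi\colon\bH\xrightarrow{\sim}\psi(\bH)$ produces an abelian $\bA\trianglelefteq\bH$ of the same index $|\bH/\bA|\le J_{\bG}$. Hence $\inf_{\bA\trianglelefteq\bH,\ \bA\text{ abelian}}|\bH/\bA|\le J_{\bG}$ for every admissible $\bH$, and taking the supremum over all such $\bH$ delivers \Cref{eq:cor:th.2.19.noncpct:sttmt}.

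I do not expect a substantive obstacle here, the statement being a corollary of \Cref{th:mor.almost2gp}; the one point deserving care is the order of quantifiers. One must pin down $U$ from a \emph{single} choice of $\varepsilon$ \emph{before} quantifying over the subgroups $\bH$, so that the same neighborhood works simultaneously for all of them, rather than inadvertently letting $U$ vary with $\bH$ (which would render the bounded-supremum statement vacuous).
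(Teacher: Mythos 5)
Your proposal is correct and is exactly the paper's argument: apply the embedding version of \Cref{th:mor.almost2gp} with a fixed small $\varepsilon$ to obtain one neighborhood $U$ working for all finite $\bH\subset\bG\cdot U\cap S$, realize each such $\bH$ as a subgroup of $\bG$, and pull back the abelian normal subgroup guaranteed by $J_{\bG}$. The quantifier-order point you flag is the only subtlety, and you handle it as the paper implicitly does.
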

\begin{proof}
  Indeed, by \Cref{th:mor.almost2gp} sufficiently small $U$ ensures that the finite groups $\bH$ of \Cref{eq:cor:th.2.19.noncpct:sttmt} admit embeddings into $\bG$, close to the original embeddings in $\bU$. 
\end{proof}

In particular, when $\bG\le \bU$ some neighborhood $\bG\cdot U$ will itself be ad-bounded, whence the following more precise version of \Cref{eq:th219.summ}. 

\begin{corollary}\label{cor:th.2.19.cpct}
  For a compact subgroup $\bG\le \bU$ of a Banach Lie group we have
  \begin{equation*}
    J_{\bG\le \bU} = J_{\bG}
  \end{equation*}
  in the notation of \Cref{def:reljg}.  \qedhere
\end{corollary}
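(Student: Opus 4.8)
The plan is to prove the two inequalities separately. The inequality $J_{\bG\le \bU}\ge J_{\bG}$ is already recorded in \Cref{re:obviousineq} (the supremum defining any $J_{\bG\subset V}$ ranges over a larger family of finite groups than that defining $J_{\bG}$), so the entire content lies in the reverse inequality $J_{\bG\le \bU}\le J_{\bG}$. To get it, it suffices to exhibit a \emph{single} neighborhood $V\supset \bG$ with $J_{\bG\subset V}\le J_{\bG}$, since $J_{\bG\le \bU}=\inf_{V}J_{\bG\subset V}$ is then bounded above by that one term.

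First I would use compactness of $\bG$ to remove the ad-boundedness hypothesis appearing in \Cref{cor:th.2.19.noncpct}. As noted in \Cref{re:cpctbetter}, a sufficiently small neighborhood of the compact set $\bG$ is automatically ad-bounded: $\Ad(\bG)\subset GL(\fu)$ is compact hence norm-bounded, and continuity of $\Ad$ propagates that bound to a neighborhood. Fix one such tubular neighborhood and set $S:=\bG\cdot U_0$ to be ad-bounded. Next I would feed this $S$ into \Cref{cor:th.2.19.noncpct}, which yields an origin neighborhood $U\ni 1$ with
\begin{equation*}
  \sup_{\substack{\bH\le \bU\\|\bH|<\infty\\\bH\subset \bG\cdot U\cap S}}
  \inf_{\substack{\bA\trianglelefteq \bH\\\bA\text{ abelian}}}
  |\bH/\bA|
  \le
  J_{\bG}.
\end{equation*}
Shrinking $U$ so that $U\subseteq U_0$ forces $\bG\cdot U\subseteq \bG\cdot U_0=S$, whence $\bG\cdot U\cap S=\bG\cdot U$ and the left-hand side above is precisely $J_{\bG\subset \bG\cdot U}$ in the notation of \Cref{def:reljg}.

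Finally, since $U$ is an origin neighborhood the set $\bG\cdot U=\bigcup_{g\in\bG}gU$ is a genuine neighborhood of $\bG$ in $\bU$, so it is one of the sets over which the infimum defining $J_{\bG\le \bU}$ is taken; hence $J_{\bG\le \bU}\le J_{\bG\subset \bG\cdot U}\le J_{\bG}$, and together with \Cref{re:obviousineq} this gives the asserted equality. (The same chain in fact shows the a priori larger uniform constant $J^u_{\bG\le \bU}$ equals $J_{\bG}$ as well.) The one point demanding care — rather than a genuine obstacle — is the bookkeeping between the constraint $\bH\subset \bG\cdot U\cap S$ coming out of \Cref{cor:th.2.19.noncpct} and the unconstrained neighborhood $\bG\cdot U$ in \Cref{def:reljg}; arranging $S\supseteq \bG\cdot U$ by the preliminary shrinking of $U$ is exactly what aligns the two, and is the only place compactness of $\bG$ (supplying an ad-bounded tube to serve as $S$) enters.
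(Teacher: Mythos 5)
Your proposal is correct and follows exactly the route the paper intends: the lower bound is \Cref{re:obviousineq}, and the upper bound comes from taking $S$ to be an ad-bounded neighborhood $\bG\cdot U_0$ of the compact group $\bG$ (as in \Cref{re:cpctbetter} and the sentence preceding the corollary) and feeding it into \Cref{cor:th.2.19.noncpct}, with the shrinking $U\subseteq U_0$ aligning $\bG\cdot U\cap S$ with the neighborhood $\bG\cdot U$ appearing in \Cref{def:reljg}. Your added remark that the same chain pins down the uniform constant $J^u_{\bG\le\bU}$ is also correct.
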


\addcontentsline{toc}{section}{References}

\Addresses

\end{document}